\newtheorem*{theorem*}{Theorem}
\newtheorem{theorem}{Theorem}[section]
\newtheorem{lemma}[theorem]{Lemma}
\newtheorem{corollary}[theorem]{Corollary}
\newtheorem{proposition}[theorem]{Proposition}
\theoremstyle{remark}
\newtheorem{remark}[theorem]{Remark}
\title{On certain tilting modules for $SL_2$ II}
\author{Samuel Martin}
\begin{document}
\maketitle
       
\begin{abstract}
	Following \cite{Sam}, we determine exactly the highest weights for which a tensor product of two induced modules is a tilting module, for the algebraic group $SL_2$ over an algebraically closed field of positive characteristic.
\end{abstract}

\section{Introduction}

Let $G$ be the group $SL_2(k)$, where $k$ is an algebraically closed field of positive characteristic, and denote by $\nabla(r)$ the induced module of highest weight $r$. The tensor product $\nabla(r)\otimes\nabla(s)$, has been studied previously by Cavallin in \cite{Cavallin}, where he used the identification of $\nabla(r)$ as the $r^\text{th}$ symmetric power of the two dimensional natural module $E$ to give a decomposition of the polynomial $GL_2(k)$ modules $S^rE\otimes S^sE$. In \cite{Sam}, a description of the weights $r,s$ such that $\nabla(r)\otimes\Delta(s)$ is a tilting module is given, and in \cite{Sam+Steve} the decomposition into indecomposable summands of these modules is given. It turns out that this decomposition depends entirely upon the tilting decomposition of those $\nabla(r)\otimes\Delta(s)$ that are tilting modules. Here, we will use the results given in \cite{Sam} to determine the values of $r$ and $s$ for which $\nabla(r)\otimes\nabla(s)$ is a tilting module.
\\
\begin{remark}\label{isoremark}
We first remark that from \cite[Theorem~1.1]{Sam} we have that for $a \in \{1,\dots, p-1\}$ and $n \in \mathbb{N}$, the module $\nabla(ap^n -1 )$ is a tilting module, and thus we have an isomorphism

$$\nabla(ap^n-1) \cong \Delta(ap^n-1).$$

It follows that $\nabla(r)\otimes\nabla(ap^n - 1)$ is isomorphic to $\nabla(r)\otimes\Delta(ap^n -1)$ for all $r \in \mathbb{N}$, and so $\nabla(r)\otimes\nabla(ap^n - 1)$ is a tilting module if and only if $\nabla(r)\otimes\Delta(ap^n -1)$ is a tilting module.
\end{remark}
\begin{remark}\label{prestricted}
Let $r,s$ be $p$-restricted weights, that is $r,s \in \{0, \ldots, p-1\}$. Then it is well known that we have isomorphisms $\nabla(r)\cong\Delta(r)\cong L(r)\cong T(r)$, and so in this case $\nabla(r)\otimes\nabla(s)$ is a tilting module.
\end{remark}

We now state the main theorem of this paper.
\begin{theorem}\label{mainTheorem}
Let $r,s \in \mathbb{N}$ be such that they are not both $p$-restricted. The module $\nabla(r)\otimes\nabla(s)$ is a tilting module if and only if at least one of $r$ and $s$ is equal to $ap^n -1$, for some $a \in \{1, \ldots, p-1\}$ and $n \in \mathbb{N}$, and the other is strictly less than $p^{n+1} - 1$.
\end{theorem}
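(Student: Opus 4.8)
The plan is to reduce the problem to a question about filtrations and then treat the two implications separately. Recall that $\nabla(r)\otimes\nabla(s)$ always has a good filtration (by the Donkin--Mathieu theorem, or directly for $SL_2$), and by the Clebsch--Gordan rule its sections are $\nabla(r+s),\nabla(r+s-2),\dots,\nabla(|r-s|)$, each with multiplicity one. Hence $\nabla(r)\otimes\nabla(s)$ is a tilting module if and only if it also has a Weyl filtration, equivalently if and only if $\Delta(r)\otimes\Delta(s)$ has a good filtration, equivalently if and only if $\operatorname{Ext}^1_G(\nabla(r)\otimes\nabla(s),\nabla(\lambda))=0$ for every $\lambda\in\mathbb N$. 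It is worth stressing at the outset that the formal character does not settle the matter: for instance $\nabla(p)\otimes\nabla(p)$ is not tilting, yet (as one checks via Donkin's tensor product formula for the $T(\mu)$) its character is that of a direct sum of indecomposable tilting modules --- so the argument must genuinely use module structure, not just characters.

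For the ``if'' direction, suppose without loss of generality that $s=ap^n-1$ with $a\in\{1,\dots,p-1\}$ and that $r<p^{n+1}-1$. By Remark~\ref{isoremark}, $\nabla(r)\otimes\nabla(s)\cong\nabla(r)\otimes\Delta(s)$, and the description in \cite{Sam} of when $\nabla(r)\otimes\Delta(s)$ is tilting gives that $\nabla(r)\otimes\Delta(ap^n-1)$ is a tilting module precisely because $r<p^{n+1}-1$; hence $\nabla(r)\otimes\nabla(s)$ is tilting. (When $r$ and $s$ are both $p$-restricted the conclusion is Remark~\ref{prestricted}, but that case is excluded from the theorem.)

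For the ``only if'' direction, suppose $\nabla(r)\otimes\nabla(s)$ is tilting and that $r,s$ are not both $p$-restricted; we may take $r\le s$, so $s\ge p$. We must show that $s=ap^n-1$ for some $a,n$ with $r<p^{n+1}-1$, or the symmetric statement with $r$ and $s$ interchanged. I would argue contrapositively: assuming no such presentation exists, exhibit a weight $\lambda$ with $\operatorname{Ext}^1_G(\nabla(r)\otimes\nabla(s),\nabla(\lambda))\ne 0$. Via the identity $\operatorname{Ext}^\bullet_G(\nabla(r)\otimes\nabla(s),\nabla(\lambda))\cong H^\bullet\big(G,\Delta(r)\otimes\Delta(s)\otimes\nabla(\lambda)\big)$ and the Weyl filtration of $\Delta(r)\otimes\Delta(s)$, this reduces --- through the resulting iterated long exact sequence --- to the groups $\operatorname{Ext}^\bullet_G(\Delta(\lambda),\Delta(\mu))$ for $\mu\in\{|r-s|,|r-s|+2,\dots,r+s\}$ together with the connecting maps between successive stages. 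These Ext groups are known for $SL_2$: they are governed by the rigid Loewy structure of the Weyl modules, which is controlled by the base-$p$ expansions of the integers $\mu+1$. The point is then to check that, unless the weight condition of the theorem holds, the connecting maps cannot annihilate all of the relevant $\operatorname{Ext}^1$'s, so that an extension survives and $\nabla(r)\otimes\nabla(s)$ has no Weyl filtration; \cite{Sam} is used here both to settle the positive cases and to supply the structure of the auxiliary modules $\nabla(r)\otimes\Delta(\mu)$ that arise.

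The main obstacle is exactly this homological bookkeeping: organising the casework by the base-$p$ digits of $r$, $s$ and $r+s$, and tracking the long exact sequences precisely enough to locate a surviving $\operatorname{Ext}^1$ in every case not covered by the weight condition --- and none when it is. The remaining ingredients (the Clebsch--Gordan computation, the reduction to a Weyl-filtration question, and the ``if'' direction) are routine once \cite{Sam} and Remarks~\ref{isoremark} and \ref{prestricted} are in hand.
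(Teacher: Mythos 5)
Your ``if'' direction matches the paper's (Remark~\ref{isoremark} plus the main theorem of \cite{Sam}), and your reduction of tilting-ness to the vanishing of $\operatorname{Ext}^1_G(\nabla(r)\otimes\nabla(s),\nabla(\lambda))$ is sound. But the ``only if'' direction --- the entire content of the theorem beyond what is already in \cite{Sam} --- is not proved: you reduce it to an analysis of $\operatorname{Ext}^\bullet_G(\Delta(\lambda),\Delta(\mu))$ and the connecting maps in the iterated long exact sequences coming from the Weyl filtration of $\Delta(r)\otimes\Delta(s)$, and then state that ``the point is to check'' that a nonzero $\operatorname{Ext}^1$ survives in every excluded case. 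That check is precisely the theorem, and it is genuinely delicate: nonvanishing of $\operatorname{Ext}^1$ between individual filtration sections does not by itself produce a nonzero class for the filtered module, since the connecting homomorphisms can kill it. You acknowledge this (``the main obstacle is exactly this homological bookkeeping'') but do not carry it out, so there is a real gap.

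The paper avoids this bookkeeping entirely with a more elementary and self-contained argument. The seed is Lemma~\ref{nablap}: in $\nabla(ap)\otimes\nabla(bp)$ the unique highest-weight vector $x_1^{ap}\otimes x_1^{bp}$ is annihilated by $f$, hence cannot generate the submodule isomorphic to $\Delta((a+b)p)$ that a tilting module of that highest weight must contain; so $\nabla(ap)\otimes\nabla(bp)$ is not tilting. This is then propagated in two steps: Corollary~\ref{base} uses the splitting of $-\otimes E$ (Lemma~\ref{tensorE}) and a minimality argument to cover all $r,s$ in the relevant length-$(p-1)$ intervals, and the final proposition runs an induction on $n$ using the factorisation $\nabla(p-1+pr')\cong\nabla(p-1)\otimes\nabla(r')^F$ together with the $G_1$-fixed-point functor (Lemma~\ref{tiltingTwist2}) to strip off a Frobenius twist. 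If you want to complete your homological route you would need the known $\operatorname{Ext}$ computations between Weyl modules for $SL_2$ (Erdmann, Cox--Erdmann, Parker) \emph{and} control of the connecting maps; the paper's direct weight-vector obstruction is a much shorter path to the same conclusion.
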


Following \Cref{isoremark}, the converse statement is easy to prove: If $s = ap^n-1$ and $r <p^{n+1} - 1$, then $\nabla(r)\otimes\nabla(s)\cong\nabla(r)\otimes\Delta(s)$, and by \cite[Theorem~1.1]{Sam} this module is a tilting module. Similarly, if $r=ap^n-1$ and $s < p^{n+1} - 1$ we have  $\nabla(r)\otimes\nabla(s)\cong\Delta(r)\otimes\nabla(s)$, and again, this is a tilting module. The rest of this paper will be devoted to proving that these are the only modules for which $\nabla(r)\otimes\nabla(s)$ is a tilting module.

\begin{remark}
We note that since the dual of a tilting module is a tilting module, \Cref{mainTheorem} also gives us the result for the tensor product of two Weyl modules.
\end{remark}

I am grateful to Stephen Donkin for the following observation.
\begin{remark}
The question of whether $\nabla(r)\otimes\nabla(s)$ is a tilting module is equivalent to asking when the module $S^rE \otimes S^sE$ is a tilting module in the category of polynomial $GL_2(k)$ modules. This is equivalent to asking when the module is both injective and projective in this category, and the problem of determining which indecomposable modules are both projective and injective in the category of polynomial $GL_n$ modules was considered in \cite{DonkinDeVisscher}. In the cases $n=2,3$ a complete solution is given, so one could also use this with the decomposition given in \cite{Cavallin} to determine whether $\nabla(r)\otimes\nabla(s)$ is a tilting module.
\end{remark}

\section{Background}
Before proving \Cref{mainTheorem}, we fix some terminology and give an overview of the results we will need. Throughout, $k$ will be an algebraically closed field of characteristic $p > 0$, and $G$ will be the affine algebraic group $SL_2(k)$. Let $B$ be the Borel subgroup of $G$ consisting of lower triangular matrices and containing the maximal torus $T$ of diagonal matrices. Let $X(T)$ be the weight lattice, which we associate with $\mathbb{Z}$ in the usual manner, so that if $x_t = \text{diag}(t, t^{-1}) \in T$ then $r(x_t) = t^r$. Under this association the set of dominant weights $X^{+}$ corresponds to the set $\mathbb{N}\cup\{0\}$.
\\
\\
By a \lq module\rq, we will always mean a finite dimensional, rational $G$-module. Let $F:G\longrightarrow G$ denote the usual Frobenius morphism, and denote by $G_1$ its kernel. For any module $V$, we denote by $V^F$ the Frobenius twist of $V$.
\\
\\
For a rational module $V$ and weight $r$, we define the $r$ weight space of $V$ to be
$$ V_r := \{ v \in V\,:\,xv = r(x)v \ \text{for all}\ x \in T\, \}.$$
We say that $r$ is a weight of $V$ if $V_r$ is non-zero. As a $T$ module, $V$ has a decomposition as a direct sum of it's non-zero weight spaces. Denote by $E$ the natural two dimensional $G$ module, with basis elements $x_1$ and $x_2$. For any $r \in \mathbb{N}$ we denote by $S^r E$ the $r^\text{th}$ symmetric power of $E$, which has basis consisting of monomials in $x_1$ and $x_2$ of degree $r$.
\\
\\
Let $k_r$ be the one dimensional $B$ module on which $T$ acts via $r \in \mathbb{Z}$, and let $\nabla(r)$ be the induced module $\text{Ind}_B^G(k_r)$. Then $\nabla(r)$ is finite dimensional and is zero when $r$ is not dominant. When $r$ is dominant we have the isomorphism $\nabla(r) \cong S^rE$. Let $\Delta(s)$ be the Weyl module of highest weight $s$, for which we have $\Delta(s) = \nabla(s)^{*}$. If $m_+ \in \Delta(s)$ is a highest weight vector, then $\Delta(s)$ has a basis given by the elements $m_+, f_1m_+, \ldots, f_sm_+$, where $f_i$ is the divided power operator
$$f_i := 1\otimes_\mathbb{Z} \frac{f^i}{i!}$$
in $U_k = k\otimes_{\mathbb{Z}}U_{\mathbb{Z}}(\mathfrak{g})$, the algebra of distributions of $G$. Here, $U_\mathbb{Z}(\mathfrak{g})$ is the Kostant $\mathbb{Z}$-form of the universal enveloping algebra of the Lie algebra $\mathfrak{g}$ of $G$, and $f$ is the usual basis element of $\mathfrak{g}$. The action of $f_i$ on a tensor product of rational $G$ modules is given by 

$$ f_i(x\otimes y) = \sum_{a+b = i} f_ax \otimes f_b y.$$

By a tilting module we mean a module which has both a good filtration and a Weyl filtration as defined in \cite{Donkin}. For each dominant weight $r$ there exists a unique indecomposable tilting module of highest weight $r$, which we denote by $T(r)$, and the dimension of its $r$ weight space is $1$. The modules $T(r)$ form a complete set of inequivalent indecomposable tilting modules \cite[Theorem~(1.1)]{Donkin}, and the tensor product of two tilting modules is also a tilting module \cite[Proposition~1.2(i)]{Donkin}.
\\
\\

We recall the following key results from \cite{Sam}.

\begin{lemma}[\cite{Sam} Proposition~2.1] \label{tensorE}

There exists a short exact sequence

$$ 0 \longrightarrow \nabla(r-1) \longrightarrow \nabla(r)\otimes E \longrightarrow \nabla(r+1) \longrightarrow 0,$$

and this is split if and only if $p$ does not divide $r+1$. \qed
\end{lemma}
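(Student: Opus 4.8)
The plan is to construct the short exact sequence explicitly using the identification $\nabla(r) \cong S^r E$ and the natural multiplication map. Since $E$ is the two-dimensional natural module, a tensor product $S^r E \otimes E$ carries two obvious $G$-equivariant maps. First I would write down the surjection $\mu \colon S^r E \otimes E \longrightarrow S^{r+1}E$ given by multiplication of polynomials, $f \otimes x \mapsto fx$; this is visibly $G$-equivariant and surjective, with highest weight $r+1$. The kernel of $\mu$ is a submodule of dimension $(r+2) + 2 - (r+2) = r$, so it has the right dimension to be $\nabla(r-1) \cong S^{r-1}E$. To identify the kernel with $\nabla(r-1)$, I would exhibit an explicit $G$-equivariant injection $\iota \colon S^{r-1}E \longrightarrow S^r E \otimes E$ landing inside $\ker \mu$, for instance the map induced by $g \mapsto g x_1 \otimes x_2 - g x_2 \otimes x_1$ (the contraction against the invariant skew form on $E$), and check that $\mu \circ \iota = 0$. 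Comparing dimensions then yields the short exact sequence.

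For the splitting criterion, I would analyse when a $G$-equivariant splitting exists. A splitting is equivalent to the existence of a $G$-equivariant section of $\mu$, or dually a retraction onto the image of $\iota$. The natural candidate for a section $\sigma \colon S^{r+1}E \longrightarrow S^r E \otimes E$ is the polarization or formal-derivative map, something like $g \mapsto \tfrac{1}{r+1}\bigl(\partial_1 g \otimes x_1 + \partial_2 g \otimes x_2\bigr)$, which is a genuine $G$-equivariant map over $\mathbb{Z}[1/(r+1)]$ but acquires a denominator $r+1$. This makes transparent why divisibility of $r+1$ by $p$ is the obstruction: the section is defined integrally only after inverting $r+1$, so it descends to a map over $k$ precisely when $p \nmid r+1$. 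To make this rigorous over a field of characteristic $p$, I would compute $\mu \circ \sigma$ on the monomial basis and show it equals multiplication by $(r+1)$ on $S^{r+1}E$, so that $\tfrac{1}{r+1}\sigma$ is a section exactly when $r+1$ is invertible in $k$.

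For the converse direction—that the sequence is non-split when $p \mid r+1$—I would argue using the divided-power action of the distribution algebra $U_k$, which is the correct tool in positive characteristic and is already set up in the excerpt. The point is to examine the action of the divided power operators $f_i$ (or the raising operators $e_i$) on a chosen highest weight vector of the top quotient $\nabla(r+1)$ and show that its preimage in $S^r E \otimes E$ cannot be chosen $G$-stable when $p \mid r+1$; equivalently, that $\operatorname{Ext}^1_G(\nabla(r+1), \nabla(r-1)) \neq 0$ in this case, which is governed by the linkage of the weights $r+1$ and $r-1$. The main obstacle will be this non-splitting direction: verifying that no $G$-equivariant section exists requires more than the failure of the one natural candidate, so I would either compute the relevant $\operatorname{Ext}^1$ directly via the linkage principle for $SL_2$, or track the $U_k$-module structure explicitly to show the extension is non-trivial whenever $p$ divides $r+1$. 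The forward construction and the section computation are routine; the genuine content lies in pinning down non-splitting precisely on the divisibility locus.
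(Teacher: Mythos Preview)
The paper does not actually prove this lemma: it is quoted from \cite{Sam} and closed with a \qed, so there is no in-paper argument to compare against. Your sketch is the standard one and is essentially correct. The multiplication map $\mu$ and the injection $\iota(g)=gx_1\otimes x_2-gx_2\otimes x_1$ give the exact sequence, and the polarization map furnishes a section precisely when $r+1$ is a unit in $k$. One small slip: your dimension count ``$(r+2)+2-(r+2)=r$'' is garbled; the correct arithmetic is $\dim(S^rE\otimes E)-\dim S^{r+1}E=2(r+1)-(r+2)=r$.

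For the non-split direction you are right that the failure of one candidate section is not enough, but the explicit $U_k$-computation you propose settles it in two lines and does not require any Ext or linkage machinery. Any $G$-section $\sigma$ must send the highest weight vector $x_1^{r+1}$ to the unique (up to scalar) weight-$(r+1)$ vector $x_1^{r}\otimes x_1$, and $\mu\circ\sigma=\mathrm{id}$ forces the scalar to be $1$. Applying $f=f_1$ and using the comultiplication formula from the paper gives
\[
\sigma\bigl((r+1)\,x_1^{r}x_2\bigr)=f\bigl(x_1^{r}\otimes x_1\bigr)=r\,x_1^{r-1}x_2\otimes x_1+x_1^{r}\otimes x_2.
\]
When $p\mid r+1$ the left side vanishes while the right side does not (its two summands are linearly independent and $r\equiv -1\not\equiv 0$), a contradiction. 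So the only genuine obstacle you flagged dissolves with exactly the tool you already planned to use.
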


For the next lemma, let $V$ be a rational $SL_2(k)$ module, and denote by $H^0(G_1, V)$ the submodule of $G_1$ fixed points of $V$. We have the identity $H^0(G_1, V_1\otimes V_2^F) \cong H^0(G_1, V_1)\otimes V_2^F$.

\begin{lemma}[\cite{Sam} Lemma~2.4]\label{tiltingTwist2}
Let $V$ be a tilting module, and define the module $W$ by $H^0(G_1, V) = W^F$. Then $W$ is a tilting module. \qed
\end{lemma}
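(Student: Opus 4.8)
The plan is to reduce the statement to a small, explicit collection of ``fundamental'' tilting modules by combining the tensor product theorem for tilting modules with the identity $H^0(G_1,V_1\otimes V_2^F)\cong H^0(G_1,V_1)\otimes V_2^F$ recorded just above. Since the fixed point functor $H^0(G_1,-)$ commutes with finite direct sums, since untwisting $U^F\mapsto U$ is additive and well defined, and since the class of tilting modules is closed under direct sums, it suffices to treat the case where $V=T(r)$ is indecomposable. For $r<p-1$ we have $T(r)=L(r)$, and $H^0(G_1,L(r))$ equals $L(0)=k$ when $r=0$ and $0$ otherwise, so the associated $W$ is $k$ or $0$, which is tilting. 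Assume then $r\ge p-1$ and write $r=r_0+pr_1$ with $p-1\le r_0\le 2p-2$ and $r_1\ge 0$. The tilting tensor product theorem for $SL_2$ (classical in this rank-one case) gives $T(r)\cong T(r_0)\otimes T(r_1)^F$, whence by the displayed identity $H^0(G_1,T(r))\cong H^0(G_1,T(r_0))\otimes T(r_1)^F$. Thus if $H^0(G_1,T(r_0))=W_0^F$ with $W_0$ tilting, then $H^0(G_1,T(r))\cong (W_0\otimes T(r_1))^F$, and $W=W_0\otimes T(r_1)$, being a tensor product of tilting modules, is tilting.

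It remains to compute $H^0(G_1,T(r_0))$ for the finitely many fundamental weights $p-1\le r_0\le 2p-2$. Here I would use that each such $T(r_0)$ is projective as a $G_1$-module. For $r_0=p-1$ this is immediate, since $T(p-1)=L(p-1)$ is the Steinberg module, which is simple and projective over $G_1$. For $r_0=(p-1)+m$ with $1\le m\le p-1$, the module $T(r_0)$ occurs as the indecomposable summand of highest weight $r_0$ in the tilting module $L(p-1)\otimes L(m)$, which is $G_1$-projective because its Steinberg factor is, and a summand of a $G_1$-projective module is again $G_1$-projective. Hence $T(r_0)|_{G_1}$ is an indecomposable projective (equivalently injective) $G_1$-module and so has simple $G_1$-socle; since this socle contains the restriction of the simple $G$-socle $L(2p-2-r_0)$, which is $G_1$-irreducible as $2p-2-r_0\le p-1$, it equals $L_1(2p-2-r_0)$, where $L_1(\mu)$ denotes the simple $G_1$-module of highest weight $\mu$. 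Therefore $H^0(G_1,T(r_0))=\operatorname{Hom}_{G_1}(k,T(r_0))$ is nonzero exactly when $2p-2-r_0=0$, i.e. $r_0=2p-2$, in which case it is the trivial $G$-socle $L(0)=k$; so $W_0=k$ when $r_0=2p-2$ and $W_0=0$ otherwise. Both are tilting, which completes the reduction.

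The main obstacle is precisely this last computation. It is \emph{not} enough to know the $G_1$-composition factors of $T(r_0)$: the trivial module admits self-extensions over $G_1$, so a $G_1$-trivial composition factor need not yield a fixed vector. The real content is to control the $G_1$-socle, which is what the projectivity argument (identifying $T(r_0)|_{G_1}$ with the $G_1$-projective cover of $L_1(2p-2-r_0)$) is designed to provide. I note that the conclusion admits a more conceptual reformulation: $H^0(G_1,-)$ should carry a good filtration of $V$ to a good filtration of the untwist $W$ and a Weyl filtration to a Weyl filtration, so that $W$ inherits both; but making this precise requires the vanishing of the relevant first $G_1$-cohomology along the filtration, which is essentially equivalent to the socle computation above.
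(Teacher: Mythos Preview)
The paper does not actually prove this lemma: it is quoted from \cite{Sam} and closed with \qed, and the surrounding text merely records the two facts it needs later, namely $H^0(G_1,T(2p-2))=L(0)$ and $H^0(G_1,T(t))=0$ for $1\le t<2p-2$, with a pointer to \cite{Sam} for the argument. Your proof is correct and, in fact, reconstructs precisely those two facts as its core computation: you reduce to indecomposable $T(r)$ by additivity, invoke the tilting tensor product theorem $T(r)\cong T(r_0)\otimes T(r_1)^F$ with $p-1\le r_0\le 2p-2$ to peel off a Frobenius twist via the identity $H^0(G_1,V_1\otimes V_2^F)\cong H^0(G_1,V_1)\otimes V_2^F$, and then identify $T(r_0)|_{G_1}$ with the $G_1$-projective cover of $L_1(2p-2-r_0)$ to read off the fixed points. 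This is the natural route and is consistent with what the paper extracts from \cite{Sam}.

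One small gap: from ``$T(r_0)$ is a summand of the $G_1$-projective module $L(p-1)\otimes L(m)$'' you conclude that $T(r_0)|_{G_1}$ is an \emph{indecomposable} projective $G_1$-module, but the argument as written only yields projectivity. Indecomposability is what pins down the $G_1$-socle as simple, and it needs a further word. The cleanest fix is a dimension count: for $p\le r_0\le 2p-2$ one has $\dim T(r_0)=(r_0+1)+(2p-1-r_0)=2p$, while the $G$-socle $L(2p-2-r_0)$ restricts to $L_1(2p-2-r_0)$ and hence forces a summand isomorphic to its $G_1$-injective hull $Q_1(2p-2-r_0)$, which already has dimension $2p$; thus $T(r_0)|_{G_1}\cong Q_1(2p-2-r_0)$ and the socle is simple as claimed. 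With that inserted, your argument is complete.
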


We will use this result in the following way. We have that $H^0(G_1, T(2p-2)) = L(0)$ and for $1 \leq t < 2p-2$ we have $H^0(G_1, T(t)) = 0$. Now $\nabla(p-1)\otimes\nabla(p-1)$ is a tilting module of highest weight $2p-2$. Its decomposition into indecomposable tilting modules contains $T(2p-2)$ exactly once, and since $\nabla(0)$ appears in the good filtration of $T(2p-2)$, there is no summand isomorphic to $T(0)$ in the decomposition of $\nabla(p-1)\otimes\nabla(p-1)$. Then we have that $H^0(G_1, \nabla(p-1)\otimes\nabla(p-1)) = L(0)$, and thus for any rational module $W$ we have

$$H^0(G_1, \nabla(p-1)\otimes\nabla(p-1)\otimes W^F) \cong W^F.$$

See the proof of this lemma in \cite{Sam} for further details.
\section{Main Result}
In this section we will show that the only $r,s$ for which $\nabla(r)\otimes\nabla(s)$ is a tilting module are those given in \Cref{mainTheorem}. We will break this up into two cases.
\\
\\
For the first case, we consider $r$ and $s$ with one equal to $ap^n-1$ for $a \in \{1,\ldots, p-1\}$ and $n \in \mathbb{N}$, and the other greater than or equal to $p^{n+1}$. Let's say $r = ap^n - 1$. As in \Cref{isoremark} we have an isomorphism $\nabla(r)\otimes\nabla(s)\cong\Delta(r)\otimes\nabla(s)$, and by \cite[Theorem~1.1]{Sam} this module is not a tilting module.
\\
\\
The remaining cases can be given by $r \in \{ ap^n, ap^n + 1, \ldots, (a+1)p^n -2 \}$ and $s \in \{bp^n, bp^n +1, \ldots (b+1)p^n -2 \}$, where $a,b \in \{0,1, \ldots, p-1\}$, not both equal to $0$, and $n\in\mathbb{N}$. The key result we will need is the following.

\begin{lemma}\label{nablap}
For $a, b \in \mathbb{N}\cup\{0\}$, not both equal to $0$, the module $\nabla(ap)\otimes\nabla(bp)$ is not a tilting module.
\end{lemma}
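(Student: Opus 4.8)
The plan is to use a Frobenius-twist reduction to peel off the $p$-divisible parts of the weights and thereby reduce to the base cases that we already understand. The key structural fact is the Frobenius-reciprocity/Steinberg-type decomposition for induced modules: for $a \in \{0,\dots,p-1\}$ and any $c \in \mathbb{N}\cup\{0\}$ one has a filtration of $\nabla(ap + c)$ (more precisely, $\nabla(ap)\otimes\nabla(c)^F$ admits a $\nabla$-filtration whose top section is $\nabla(ap+c)$ and so on), but the cleaner route is to start from the identity $\nabla(p-1)\otimes\nabla(p-1)\otimes W^F \cong$ (a tilting module with $H^0(G_1,-) = W^F$) that is spelled out just after \Cref{tiltingTwist2}. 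First I would write $\nabla(ap)$ and $\nabla(bp)$ in terms of $\nabla(a)^F, \nabla(b)^F$ and low-weight pieces: using \Cref{tensorE} repeatedly, or directly the known structure of $\nabla(ap)$, I would identify $H^0(G_1, \nabla(ap)\otimes\nabla(bp))$. By the remarks on $G_1$-fixed points, $H^0(G_1, \nabla(ap)\otimes\nabla(bp)) \cong H^0(G_1,\nabla(ap))\otimes H^0(G_1,\nabla(bp)) \oplus (\text{cross terms})$, and since $\nabla(ap)$ has $\nabla(p-1)$ as a $G_1$-composition piece appearing with the relevant multiplicity, this computation should show $H^0(G_1,\nabla(ap)\otimes\nabla(bp)) \cong \big(\nabla(a)\otimes\nabla(b)\big)^F$ (up to lower terms), or some explicit twist thereof.

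Next, assuming $\nabla(ap)\otimes\nabla(bp)$ were a tilting module, \Cref{tiltingTwist2} would force the module $W$ with $H^0(G_1, \nabla(ap)\otimes\nabla(bp)) = W^F$ to be a tilting module as well. By the previous paragraph $W$ is (essentially) $\nabla(a)\otimes\nabla(b)$, so we would conclude that $\nabla(a)\otimes\nabla(b)$ is a tilting module with $a,b \le p-1$ not both zero — which is permitted by \Cref{prestricted}, so this alone is not yet a contradiction. The real leverage is in the good/Weyl-filtration multiplicities: being a tilting module is very restrictive, and I would extract a numerical obstruction. Specifically, I would compare the dimension of $\nabla(ap)\otimes\nabla(bp)$ with the sum of dimensions of the indecomposable tilting modules $T(m)$ that must appear (those $T(m)$ forced by the highest weights occurring), and show the only way to fit a $\nabla$-filtration and $\Delta$-filtration simultaneously fails because some $T(m)$ with $m$ in a "forbidden" range ($m \ge p^{?}$) would have to be a direct summand, contradicting the fact — obtainable from \Cref{tensorE} and the structure of $\nabla$ — that $\nabla(ap)\otimes\nabla(bp)$ has a $\nabla$-filtration section $\nabla(ap+bp)$ at the top which is not itself tilting once $ap+bp \ge p$ and is not of the form $cp^n-1$.

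The cleanest concrete argument, which I would try first, is an induction on $n$ combined with the following dichotomy applied to the base layer: write $ap = a'p$, and reduce $\nabla(ap)\otimes\nabla(bp)$ via the tensor identity after \Cref{tiltingTwist2} to controlling $\nabla(a)\otimes\nabla(b)$ together with the fact that $\nabla(ap)\otimes\nabla(bp)$ is a tilting module iff $\nabla(ap)\otimes\nabla(bp)\otimes(\text{something})$ absorbs into the $G_1$-theory correctly; then invoke \cite[Theorem~1.1]{Sam} on the relevant $\nabla(r)\otimes\Delta(s)$ pieces obtained after applying \Cref{isoremark}-type isomorphisms to any sub-tensor-factor of the form $\nabla(cp^m-1)$. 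I expect the main obstacle to be the bookkeeping in the $H^0(G_1,-)$ computation: showing precisely that $H^0(G_1,\nabla(ap)\otimes\nabla(bp))$ is a \emph{single} Frobenius twist $W^F$ with $W=\nabla(a)\otimes\nabla(b)$ (and not a larger module whose tilting-ness would be automatic), because $\nabla(ap)$ is uniserial-ish only in special characteristics and its $G_1$-socle/$G_1$-fixed structure depends delicately on the $p$-adic digits of $a$. Handling the general $a$ — rather than just $a=1$ — is where the argument will need the most care, and I would likely first prove the lemma for $a=b=1$ (where $\nabla(p)\otimes\nabla(p)$ and the identity following \Cref{tiltingTwist2} interact most transparently) and then bootstrap.
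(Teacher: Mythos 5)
There is a genuine gap: your proposal never arrives at a contradiction, and the route you choose cannot produce one. You correctly observe that the $G_1$-fixed-point reduction, even if the bookkeeping worked out to $H^0(G_1,\nabla(ap)\otimes\nabla(bp))\cong(\nabla(a)\otimes\nabla(b))^F$, would only tell you (via \Cref{tiltingTwist2}) that \emph{if} $\nabla(ap)\otimes\nabla(bp)$ were tilting then $\nabla(a)\otimes\nabla(b)$ would be tilting --- which, for $p$-restricted $a,b$, is true by \Cref{prestricted}. So the reduction is self-defeating for this lemma: the ``reduced'' module is genuinely a tilting module, and no contradiction can come out of that direction. Everything after that point in your write-up (the ``numerical obstruction,'' the dimension comparison with forced summands $T(m)$, the ``induction on $n$'' for a statement in which no $n$ appears, the plan to first do $a=b=1$ and bootstrap) is a description of things you would try, not an argument; in particular the claim that $H^0(G_1,V_1\otimes V_2)$ splits as $H^0(G_1,V_1)\otimes H^0(G_1,V_2)$ plus ``cross terms'' is not a usable statement --- note that $H^0(G_1,\nabla(p-1))=0$ while $H^0(G_1,\nabla(p-1)\otimes\nabla(p-1))=L(0)$, so no such naive decomposition holds.

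The missing idea is an explicit, local obstruction at the highest weight, and it is elementary. The weight space of $\nabla(ap)\otimes\nabla(bp)$ of weight $(a+b)p$ is one-dimensional, spanned by $x_1^{ap}\otimes x_1^{bp}$, and
$$f\bigl(x_1^{ap}\otimes x_1^{bp}\bigr)=ap\,x_1^{ap-1}x_2\otimes x_1^{bp}+bp\,x_1^{ap}\otimes x_1^{bp-1}x_2=0$$
in characteristic $p$. On the other hand, the Ext-vanishing between Weyl modules for $SL_2$ shows that $T((a+b)p)$ contains a \emph{submodule} isomorphic to $\Delta((a+b)p)$, generated by a weight-$(a+b)p$ vector on which $f$ acts nontrivially (since $\dim\Delta((a+b)p)>1$). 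If $\nabla(ap)\otimes\nabla(bp)$ were tilting it would have $T((a+b)p)$ as a direct summand, hence would contain such a $\Delta((a+b)p)$, generated by its unique (up to scalar) vector of weight $(a+b)p$ --- but that vector is killed by $f$. This single computation disposes of all $a,b$ at once (including $a,b\geq p$, which your restriction to $p$-restricted digits would not cover), with no $G_1$-cohomology needed; the case $a=0$ or $b=0$ is then just \cite[Theorem~1.1]{Sam}.
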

\begin{proof}
The case when either $a=0$ or $b=0$ can be seen as a special case of \cite[Theorem~1.1]{Sam}. For the remaining cases we first note that the indecomposable tilting module $T((a+b)p)$ has a Weyl filtration with exactly one section isomorphic to $\Delta((a+b)p)$, and all others isomorphic to $\Delta((a+b)p - 2i)$ for some $i \in \mathbb{N}$. Since $\text{Ext}^1_G(\Delta((a+b)p), \Delta((a+b)p-2i)) = 0$ \cite[(3.1)(1)]{Erdmann}, we have in particular that $T((a+b)p)$ contains a submodule isomorphic to $\Delta((a+b)p)$.
\\
\\
Now, we will assume $a \geq b$, so that $\text{Ch}\nabla(ap)\otimes\nabla(bp) = \chi((a+b)p) + \chi ((a+b)p-2) + \cdots + \chi((a-b)p)$, and the weight space $(\nabla(ap)\otimes\nabla(bp))_{(a+b)p}$ is one dimensional, and given by the span of the vector $x_1^{ap}\otimes x_1^{bp}$. For this vector we have 

$$f(x_1^{ap}\otimes x_1^{bp}) = fx_1^{ap} \otimes x_1^{bp} + x_1^{ap}\otimes fx_1^{bp} = 0, $$

using that $fx_1^{ap} =apx_1^{ap-1}x_2 = 0$ and similarly $fx_1^{bp} = 0$. Now if $\nabla(ap)\otimes\nabla(bp)$ were a tilting module, it would have exactly one summand isomorphic to $T((a+b)p)$, and so also one submodule isomorphic to $\Delta((a+b)p)$. Such a submodule would be generated by a vector of weight $(a+b)p$, but we have just shown that the only vector of weight $(a+b)p$ does not generate such a submodule, and we conclude that $\nabla(ap)\otimes\nabla(bp)$ is not a tilting module.

\end{proof}

We obtain the following corollary.

\begin{corollary}\label{base}
Let $a, b$ be as above, then for $r \in   \{ap, ap+1, \ldots, ap + p-2\}$  and $s \in \{bp, bp+1, \ldots, bp + p-2\}$, the module $\nabla(r)\otimes\nabla(s)$ is not tilting.
\end{corollary}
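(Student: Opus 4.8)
The plan is to deduce this from Lemma \ref{nablap} by a downward induction, peeling off tensor factors of $E$ and using the short exact sequence of Lemma \ref{tensorE}. The point is that $\nabla(ap)$ sits inside $\nabla(r)\otimes E^{\otimes(ap+p-2-r)}$ — or rather, $\nabla(r)$ is a quotient (and submodule) of $\nabla(ap)$ tensored with copies of $E$ — and that in the range $r \in \{ap, \dots, ap+p-2\}$ all the relevant sequences split, so $\nabla(r)\otimes\nabla(s)$ being tilting would force $\nabla(ap)\otimes\nabla(bp)$ to be tilting, contradicting Lemma \ref{nablap}.

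More precisely, first I would record the following consequence of Lemma \ref{tensorE}: if $p \nmid r+1$, the sequence $0 \to \nabla(r-1) \to \nabla(r)\otimes E \to \nabla(r+1)\to 0$ splits, so $\nabla(r)\otimes E \cong \nabla(r-1)\oplus\nabla(r+1)$, and hence both $\nabla(r-1)$ and $\nabla(r+1)$ are direct summands of $\nabla(r)\otimes E$. Iterating this starting from $r = ap$: since $ap+1, ap+2, \dots, ap+p-1$ are all $\not\equiv 0 \pmod p$, we get that $\nabla(ap+j)$ is a direct summand of $\nabla(ap)\otimes E^{\otimes j}$ for each $j \in \{0, 1, \dots, p-2\}$ (at each step $\nabla(ap+j)\otimes E \cong \nabla(ap+j-1)\oplus\nabla(ap+j+1)$ since $p\nmid ap+j+1$ for $j\le p-2$, so $\nabla(ap+j+1)$ is a summand of $\nabla(ap+j)\otimes E$, which is a summand of $\nabla(ap)\otimes E^{\otimes(j+1)}$). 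Likewise $\nabla(bp+j')$ is a direct summand of $\nabla(bp)\otimes E^{\otimes j'}$ for $j' \in \{0,\dots,p-2\}$.

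Now suppose $\nabla(r)\otimes\nabla(s)$ is a tilting module with $r = ap+j$, $s = bp+j'$, $0 \le j, j' \le p-2$. Since $E$ is a tilting module (it is $\nabla(1) = T(1)$), and tensor products of tilting modules are tilting, the module $\nabla(r)\otimes\nabla(s)\otimes E^{\otimes(p-2-j)}\otimes E^{\otimes(p-2-j')}$ is tilting. But $\nabla(ap)$ is a direct summand of $\nabla(r)\otimes E^{\otimes(p-2-j)}$? Here I need the summand relation in the other direction: from $\nabla(ap+j)\otimes E \cong \nabla(ap+j-1)\oplus\nabla(ap+j+1)$ (valid for $1\le j \le p-2$, since then $p \nmid ap+j+1$), we get that $\nabla(ap+j-1)$ is a direct summand of $\nabla(ap+j)\otimes E$, and iterating downward, $\nabla(ap)$ is a direct summand of $\nabla(ap+j)\otimes E^{\otimes j} = \nabla(r)\otimes E^{\otimes j}$. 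Applying this to both factors: $\nabla(ap)\otimes\nabla(bp)$ is a direct summand of $\nabla(r)\otimes\nabla(s)\otimes E^{\otimes j}\otimes E^{\otimes j'}$. A direct summand of a tilting module is a tilting module (being itself a direct sum of indecomposable tilting modules), so $\nabla(ap)\otimes\nabla(bp)$ would be tilting, contradicting Lemma \ref{nablap}. Hence $\nabla(r)\otimes\nabla(s)$ is not tilting.

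The only slightly delicate point is bookkeeping the congruence conditions: one must check that throughout the chain $\nabla(ap)\otimes E \to \nabla(ap+1)\otimes E \to \cdots$ every index $ap+j$ with $1 \le j \le p-2$ satisfies $p \nmid ap+j+1$, which holds because $1 \le j+1 \le p-1$. This is where the hypothesis $r \le ap+p-2$ (rather than $ap+p-1$) is essential — at $r = ap+p-1$ the sequence need not split, and indeed $\nabla(ap+p-1) = \nabla((a+1)p-1)$ is itself tilting, so the conclusion genuinely fails there. I expect this congruence bookkeeping, together with being careful about which direction (submodule vs. quotient, and hence which index is a summand) each split sequence gives, to be the main thing to get right; the structural argument via tensoring up by $E$ and passing to summands is routine.
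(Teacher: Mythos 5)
Your proof is correct and takes essentially the same route as the paper: both reduce to \Cref{nablap} by tensoring with copies of $E$, using the splitting of \Cref{tensorE} (valid since $p\nmid r+1$ throughout the stated range) together with the fact that a direct summand of a tilting module is tilting. The paper phrases the descent as a minimal-counterexample argument rather than an explicit iterated chain of summands, but the content is identical.
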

\begin{proof}
For a contradiction, suppose that for some $r$ and $s$ we have that $\nabla(r)\otimes\nabla(s)$ is tilting, and choose $r$ and $s$ so that $r+s$ is minimal. Then we have that $\nabla(r)\otimes E\otimes\nabla(s)$ is tilting, but since $p$ does not divide $r + 1$, by \Cref{tensorE} we have that this module is isomorphic to 

$$ \nabla(r-1)\otimes\nabla(s) \oplus \nabla(r+1)\otimes\nabla(s),$$

and so each summand is a tilting module. Similarly we have that $\nabla(r)\otimes\nabla(s-1)$ is a tilting module, so by minimality we have that $\nabla(ap)\otimes\nabla(bp)$ is tilting, contradicting \Cref{nablap}.
\end{proof}

\begin{remark}\label{twistRemark}
Before giving the final result, we note that if $r = p-1 + pr'$ then we have the isomorphism $\nabla(r)\cong\nabla(p-1)\otimes\nabla(r')^F$ \cite[Proposition~II.3.19]{Jantzen}.
\end{remark}

\begin{proposition}
For $r \in \{ ap^n, ap^n + 1, \ldots, (a+1)p^n -2 \}$ and $b \in \{bp^n, bp^n +1, \ldots (b+1)p^n -2 \}$, where $a,b \in \{1, \ldots, p-1\}$ and $n \in \mathbb{N}$, the module $\nabla(r)\otimes\nabla(s)$ is not a tilting module.
\end{proposition}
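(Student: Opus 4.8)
The plan is to induct on $n$, the base case $n=1$ being contained in \Cref{base}. So assume $n\geq 2$, assume the proposition holds with $n$ replaced by $n-1$, and suppose for a contradiction that $\nabla(r)\otimes\nabla(s)$ is a tilting module for some $r\in\{ap^n,\dots,(a+1)p^n-2\}$ and some $s\in\{bp^n,\dots,(b+1)p^n-2\}$.

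\emph{Step 1: shifting $r$ and $s$ into the residue class $p-1$ modulo $p$.} Since $E=T(1)$ is tilting, so is $\nabla(r)\otimes E\otimes\nabla(s)$; and whenever $p\nmid r+1$, \Cref{tensorE} makes the sequence $0\to\nabla(r-1)\to\nabla(r)\otimes E\to\nabla(r+1)\to 0$ split, so $\nabla(r)\otimes E\otimes\nabla(s)\cong(\nabla(r-1)\otimes\nabla(s))\oplus(\nabla(r+1)\otimes\nabla(s))$ and both summands are tilting. Hence, as long as I stay inside $\{ap^n,\dots,(a+1)p^n-2\}$, I may replace $r$ by $r\pm1$ while keeping $\nabla(r)\otimes\nabla(s)$ tilting, provided the current value of $r$ is not $\equiv p-1\pmod p$. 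Since $n\geq 2$, this interval contains the weights $ap^n+p-1,\,ap^n+2p-1,\,\dots,\,(a+1)p^n-p-1$, all $\equiv p-1\pmod p$, and a short case check — move right to the nearest such weight when $r\leq(a+1)p^n-p-1$, and move left to $(a+1)p^n-p-1$ when $r\geq(a+1)p^n-p$ — shows that from any starting $r$ one reaches some $r^{\ast}\equiv p-1\pmod p$ in the interval with $\nabla(r^{\ast})\otimes\nabla(s)$ still tilting. The symmetric argument on the second factor produces $s^{\ast}\equiv p-1\pmod p$ in $\{bp^n,\dots,(b+1)p^n-2\}$ with $\nabla(r^{\ast})\otimes\nabla(s^{\ast})$ tilting.

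\emph{Step 2: passing to the Frobenius twist.} Writing $r^{\ast}=p-1+pm$ and $s^{\ast}=p-1+p\ell$, the bounds on $r^{\ast}$ and $s^{\ast}$ force $m\in\{ap^{n-1},\dots,(a+1)p^{n-1}-2\}$ and $\ell\in\{bp^{n-1},\dots,(b+1)p^{n-1}-2\}$, the level-$(n-1)$ intervals for $a$ and $b$. By \Cref{twistRemark}, $\nabla(r^{\ast})\cong\nabla(p-1)\otimes\nabla(m)^F$ and $\nabla(s^{\ast})\cong\nabla(p-1)\otimes\nabla(\ell)^F$, whence
$$\nabla(r^{\ast})\otimes\nabla(s^{\ast})\;\cong\;\nabla(p-1)\otimes\nabla(p-1)\otimes\bigl(\nabla(m)\otimes\nabla(\ell)\bigr)^F.$$
Applying $H^0(G_1,-)$ and using the identity $H^0(G_1,\nabla(p-1)\otimes\nabla(p-1)\otimes W^F)\cong W^F$ recorded just after \Cref{tiltingTwist2}, I get $H^0(G_1,\nabla(r^{\ast})\otimes\nabla(s^{\ast}))\cong(\nabla(m)\otimes\nabla(\ell))^F$. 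Since $\nabla(r^{\ast})\otimes\nabla(s^{\ast})$ is tilting, \Cref{tiltingTwist2} forces $\nabla(m)\otimes\nabla(\ell)$ to be tilting; but by the inductive hypothesis — or by \Cref{base} when $n-1=1$ — it is not, a contradiction, and the induction is complete.

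The genuine obstacle is Step 1: verifying that the ``$\pm1$ via $E$'' moves always chain together to land on a weight $\equiv p-1\pmod p$ \emph{inside} the given interval. This depends on the interval containing such a weight at all — which fails precisely when $n=1$, explaining why that case is isolated — and on correctly treating the upper boundary weights $(a+1)p^n-p,\dots,(a+1)p^n-2$, from which one must move downward rather than upward. Once Step 1 is established, Step 2 is a formal manipulation with Frobenius twists and $G_1$-fixed points resting on \Cref{tiltingTwist2} and the displayed identity above.
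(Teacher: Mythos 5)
Your proof is correct and follows essentially the same route as the paper: induction on $n$ with base case \Cref{base}, translation by $E$ via \Cref{tensorE}, and the Frobenius twist argument via \Cref{tiltingTwist2}. The only organizational difference is that you chain the $E$-moves to push both weights into the residue class $p-1$ modulo $p$ (treating the upper end of the interval explicitly by moving downward), whereas the paper disposes of the case where neither residue is $p-1$ by invoking \Cref{base} directly and then uses a single $E$-move with a maximality argument for the mixed case; if anything, your explicit treatment of the boundary weights near $(b+1)p^n-2$ is the more careful of the two.
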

\begin{proof}
We prove this by induction on $n$, where the base case $n=1$ is given by \Cref{base}. Write $r$ and $s$ uniquely in the form $r = ap^n + a_0p + r_0$ and $s = bp^n + b_0 + s_0$, with $r_0, s_0 \in \{0, \ldots, p-1\}$ and $0 \leq a_0, b_0 < p^{n-1} - 1$. Now if neither $r_0$ nor $s_0$ is equal to $p-1$, then the result holds by \Cref{base}. Without loss of generality, we assume $r_0 = p-1$.
\\
\\
We consider the cases $s_0 = p-1$ and $s_0 \neq p-1$ separately. If $s_0 = p-1$ then we have

$$ \nabla(r)\otimes\nabla(s) \cong \nabla(p-1)\otimes\nabla(p-1)\otimes\Big(\nabla(ap^{n-1} + a_0)\otimes\nabla(bp^{n-1} + b_0)\Big)^F,$$

from \Cref{twistRemark}. By induction we have that $\nabla(ap^{n-1} + a_0)\otimes\nabla(bp^{n-1} + b_0)$ is not a tilting module, so by \Cref{tiltingTwist2}, neither is  $\nabla(r)\otimes\nabla(s)$.
\\
\\
For $s_0 < p-1$, suppose that $\nabla(r)\otimes\nabla(s)$ is a tilting module, and choose $s_0$ maximal. Then since $p$ does not divide $s_0 + 1$ we have that the tilting module $\nabla(r)\otimes E\otimes\nabla(s)$ is isomorphic to the direct sum 

$$\nabla(r)\otimes\nabla(s-1) \oplus \nabla(r)\otimes\nabla(s+1)$$

and each summand is a tilting module. Thus we take $s_0 = p-2$, but then by the above $\nabla(r)\otimes\nabla(s+1)$ is a tilting module, contradicting the case when $s_0 = p-1$. We conclude that for all $0 \leq s_0 \leq p-1$, the module $\nabla(r)\otimes\nabla(s)$ is not a tilting module.

\end{proof}

We have shown that for all $r$ and $s$ which are not as described in \Cref{mainTheorem}, the module $\nabla(r)\otimes\nabla(s)$ is not a tilting module, thereby completing the proof.

\bibliography{tiltingbib}

\begin{thebibliography}{1}

\bibitem{Cavallin}
M.~Cavallin.
\newblock Representations of ${SL}_2({K})$.
\newblock Master's thesis, Ecole Polytechnique F\'ed\'erale de Lausanne, 2012.

\bibitem{DonkinDeVisscher}
M.~De~Visscher and S.~Donkin.
\newblock On projective and injective polynomial modules.
\newblock {\em Mathematische Zeitschrift}, 251:338--358, 2005.

\bibitem{Donkin}
S.~Donkin.
\newblock On tilting modules for algebraic groups.
\newblock {\em Mathematische Zeitschrift}, 212:39--60, 1993.

\bibitem{Sam+Steve}
S.~Donkin and S.~Martin.
\newblock A {C}lebsch-{G}ordan decomposition in positive characteristic.
\newblock {\em arXiv:1904.02521}, 2019.

\bibitem{Erdmann}
K.~Erdmann.
\newblock $\text{Ext}^1$ for {W}eyl modules of ${SL}_2({K})$.
\newblock {\em Mathematische Zeitschrift}, 218:447--459, 1995.

\bibitem{Jantzen}
J.C. Jantzen.
\newblock {\em Representations of Algebraic Groups}.
\newblock Academic Press, 1987.

\bibitem{Sam}
S.~Martin.
\newblock On certain tilting modules for ${SL}_2$.
\newblock {\em Journal of Algebra}, 506:397--408, 2018.

\end{thebibliography}
\bibliographystyle{plain}

\end{document}